\author{Masaki Kameko}
\address{Department of Mathematical Sciences,
Shibaura Institute of Technology,
307 Minuma-ku Fukasaku, Saitama-City 337-8570, Japan}
\email{kameko@shibaura-it.ac.jp}
\thanks{JSPS KAKENHI Grant Number JP17K05263 supported this work.}
\keywords{coniveau filtration, classifying space, projective unitary group}
\subjclass[2020]{14C30,55R35,55S10}
\newtheorem{theorem}{Theorem}[section]
\newtheorem{proposition}[theorem]{Proposition}
\theoremstyle{definition}
\begin{document}
\title{Coniveau filtrations with $\mathbb{Z}/2$ coefficients} 
\maketitle

\begin{abstract}
We show that two coniveau filtrations on the mod $2$ cohomology group of a smooth projective complex variety differ.
\end{abstract}


\section{Introduction}

Let $X$ be a smooth complex variety of dimension $m$. We say a cohomology class $x\in  H^{i}(X; A)$ with coefficients in an abelian group $A$ has coniveau $\geq c$ if it vanishes outside of a closed subvariety of codimension $\geq c$. We also say that $x$ has strong coniveau $\geq c$ if it is the Gysin pushforward of a cohomology class on a smooth complex variety $Y$ of dimension $\leq m-c$ through a proper morphism $f\colon Y \to X$. We use these notions to define two filtrations on the cohomology group $H^i(X; A)$. We denote them by $N^cH^i(X;A)$, $\tilde{N}^cH^i(X;A)$, respectively. In general, $\tilde{N}^cH^i(X;A)\subset {N}^cH^i(X;A)$. The equality $\tilde{N}^cH^i(X;A)= {N}^cH^i(X;A)$ holds if $X$ is proper and $A=\mathbb{Q}$. Recently, Benoist and Ottem \cite{benoist-ottem-2021} showed that if $X$ is projective and $A=\mathbb{Z}$, the equality does not hold in general. Also, they asked whether the inclusion $\tilde{N}^1 H^{3}(X;\mathbb{Z}/2)\subset N^1H^{3}(X;\mathbb{Z}/2)$ is strict for a smooth projective complex variety $X$. The goal of this paper is to answer this question. Our result is as follows:


\begin{theorem}\label{theorem:1.1} There is a smooth projective complex variety $X$ such that the inclusion $\tilde{N}^1H^{3}(X;\mathbb{Z}/2)\subset N^1H^{3}(X;\mathbb{Z}/2)$ is strict. \end{theorem}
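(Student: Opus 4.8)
The plan is to realize the desired phenomenon inside the mod $2$ cohomology of the classifying space $BPU_n$ of a projective unitary group and then transport it to an honest smooth projective variety; since the inclusion $\tilde N^1\subseteq N^1$ is already known, it suffices to produce a single class in $N^1H^3\setminus\tilde N^1H^3$. First I would fix $n$ divisible by a suitable power of $2$ and take for $X$ a smooth projective approximation of $BPU_n$ in the sense of Totaro, so that the restriction $H^i(BPU_n;\mathbb{Z}/2)\to H^i(X;\mathbb{Z}/2)$ is an isomorphism for $i\le 3$ and both filtrations on $H^3(X;\mathbb{Z}/2)$ are insensitive to the choice of approximation. The advantage of $BPU_n$ is that its low-degree mod $2$ cohomology is governed by the universal degree-$n$ Azumaya algebra, equivalently the universal Brauer class $u\in H^2$, and the generic point of $X$ carries the corresponding generic Brauer class; this is exactly the book-keeping device that will separate the two filtrations.

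The second step is to produce a class $\alpha\in H^3(X;\mathbb{Z}/2)$ lying in $N^1$. Here I would use the Bloch--Ogus description $N^1H^3(X;\mathbb{Z}/2)=\ker\bigl(H^3(X;\mathbb{Z}/2)\to H^3(k(X);\mathbb{Z}/2)\bigr)$, so that membership in $N^1$ reduces to the single condition that $\alpha$ restrict to zero at the generic point. Concretely I would search for $\alpha$ among the classes that die after passing to the generic Azumaya algebra, and verify vanishing of $\alpha|_{k(X)}$ in the Galois cohomology $H^3(k(X);\mathbb{Z}/2)$ by a direct symbol computation. The naive candidate $Sq^1u$, the mod $2$ reduction of the Brauer class, does not work because it is generically nonzero; the delicate point of this step is to exhibit a class that is topologically nontrivial yet generically trivial, and this is what forces the choice of $n$ together with an explicit computation of the restriction map on $H^3(BPU_n;\mathbb{Z}/2)$.

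The third step is to show $\alpha\notin\tilde N^1$. I would exploit two compatibilities of the Steenrod squares. On the one hand, Steenrod operations preserve the coniveau filtration, since $\alpha|_U=0$ on a dense open $U$ forces $(Sq^i\alpha)|_U=0$ by naturality. On the other hand, for a proper morphism $f\colon Y\to X$ of smooth complex varieties the virtual normal bundle $\nu_f$ is complex, so its odd Stiefel--Whitney classes vanish and the Riemann--Roch formula for Gysin maps degenerates: in particular $Sq^1$ commutes with $f_*$, giving $Sq^1(f_*\beta)=f_*(\beta^2)$ for $\beta\in H^1(Y;\mathbb{Z}/2)$, while the higher squares of $f_*\beta$ are controlled by $c_1(\nu_f)$ (for instance $Sq^2(f_*\beta)=f_*(c_1(\nu_f)\,\beta)$). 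Assuming $\alpha=f_*\beta$ with $\beta\in H^1(Y;\mathbb{Z}/2)$ and $\dim Y=\dim X-1$, these identities pin down $Sq^1\alpha$, $Sq^2\alpha$ and $\alpha^2=Sq^3\alpha$ as very specific pushforwards, and I would derive a contradiction by comparing them with the action of the Steenrod algebra on $H^*(BPU_n;\mathbb{Z}/2)$, whose module structure is computable.

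The main obstacle is the tension between the second and third steps: the class must be generically trivial, so that it lands in $N^1$ at all, and yet must violate the numerical relations imposed on a genuine $f_*\beta$ by the Steenrod--Gysin constraints. Reconciling these demands both the correct $n$ and a careful computation of $H^{\le 3}(BPU_n;\mathbb{Z}/2)$, of its restriction to the generic point, and of its Steenrod-algebra action. I expect this computation, rather than any formal step, to be where the real difficulty lies.
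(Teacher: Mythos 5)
Your overall strategy --- approximate $BPU_n$ by a smooth projective variety, exhibit a degree-$3$ class of coniveau $\geq 1$, and rule out strong coniveau $\geq 1$ by playing cohomology operations against Gysin maps --- is the same as the paper's (which takes $n=4$ and, to guarantee projectivity, approximates $B(PU(4)\times S^1)$ following Ekedahl and Benoist rather than Totaro). But your third step, the heart of the matter, has a genuine gap. If $\alpha=f_*\beta$ with $\beta\in H^1(Y;\mathbb{Z}/2)$, the Wu/Riemann--Roch formula gives $\mathrm{Sq}^2(f_*\beta)=f_*(\beta\, w_2(\nu_f))$ and similar expressions for $\mathrm{Sq}^3$, but these involve the characteristic classes of the normal bundle of an \emph{arbitrary} proper morphism $f$; since $Y$, $f$ and $\nu_f$ are unconstrained, the pushforwards $f_*(\beta\,w_2(\nu_f))$, $f_*(\beta^2)$, etc.\ can a priori take any value, and no contradiction with the Steenrod-module structure of $H^*(BPU_n;\mathbb{Z}/2)$ follows. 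The paper's key idea, which is missing from your proposal, is to use the Milnor operations $Q_i$ instead: because the normal bundle is complex, its Thom class is annihilated by every $Q_i$, so $Q_i$ commutes with $f_*$ with \emph{no} correction terms; moreover $Q_2(y_1)=y_1^8=Q_1(y_1^5)$ for a one-dimensional class, so $Q_2(f_*y_1)=Q_1(f_*(y_1^5))$ factors through $f_*(y_1^5)\in H^7$, which vanishes modulo a suitable Steenrod-invariant ideal $I$ with $(H^*(X;\mathbb{Z}/2)/I)^7=\{0\}$; since $Q_2(x_3)=x_5^2\neq 0$ mod $I$ in degree $10$, one gets the contradiction. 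Note that this forces you to control the approximation up to degree $10$, not just degree $3$ as you assume.

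Two smaller points. First, your step 2 replaces an easy argument by a hard one: the relevant class is the mod $2$ reduction of a generator of the finite cyclic group $H^3(BPU(4);\mathbb{Z})\simeq\mathbb{Z}/4$, and torsion integral classes automatically have coniveau $\geq 1$ (Benoist--Ottem), so membership in $N^1$ is immediate; the ``direct symbol computation'' in $H^3(k(X);\mathbb{Z}/2)$ that you defer is nontrivial and never carried out. Second, you leave the choice of $n$ and the computation of $H^*(BPU_n;\mathbb{Z}/2)$ entirely open, whereas the argument really does hinge on specific facts about $BPU(4)$ (namely $H^7=\{0\}$, $\mathrm{Sq}^2 x_3=x_5$, $x_5^2\neq 0$) available from Kono--Mimura and Toda via the isomorphism $PU(4)\simeq PSO(6)$. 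As written, the proposal identifies the right arena but does not contain the idea that makes the proof work.
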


For a compact Lie group $G$, we denote its classifying space by $BG$. By Ekedahl's theorem, for a compact Lie group $G$ and a positive integer $m$, there is a smooth projective complex variety $X$ with an $m$-equivalence $X\to B(G\times S^1)$. Let $PU(n)$ be the projective unitary group that is the quotient group of the unitary group $U(n)$ by its center $S^1$. We show that for $m>10$ and $G=PU(4)$, the generator of $H^3(X;\mathbb{Z}/2)\simeq \mathbb{Z}/2$ has coniveau $\geq 1$ and strong coniveau $<1$. 

For an odd prime $p$ and $A = \mathbb{Z}/p$, it is very likely that replacing $PU(4)$ by $PU(p^2)$ would produce similar examples. To do so, we need to compute the mod $p$ cohomology of $BPU(p^2)$, but even the mod $3$ cohomology groups for $BPU(9)$ are not yet known, suggesting that this would require considerable effort.  This is why we only consider the case $A=\mathbb{Z}/2$ in this paper.

This paper is organized as follows: Section~\ref{sec2} gives a sufficient condition for a cohomology class to have coniveau $\geq 1$. Section~\ref{sec3} provides a sufficient condition for a cohomology class to have strong coniveau $<1$. In Section~\ref{sec4}, we prove the theorem.

In this paper, cohomology groups mean singular cohomology groups of topological spaces. When considering a smooth complex variety as a topological space, we consider the complex topology. So, it is homeomorphic to its underlying $C^\infty$-manifold. We write $H^{*}(X;\mathbb{Z}/2)$ for the mod $2$ cohomology ring of a topological space $X$. By an ideal of $H^{*}(X;\mathbb{Z}/2)$, we mean a homogeneous ideal of the graded ring $H^{*}(X;\mathbb{Z}/2)$. Thus, the quotient ring $H^{*}(X;\mathbb{Z}/2)/I$ is also a graded ring. We write $(H^*(X;\mathbb{Z}/2)/I)^i$ for $H^i(X;\mathbb{Z}/2)/(I\cap H^i(X;\mathbb{Z}/2))$. We denote the $\mathbb{Z}/2$ vector space spanned by a finite set $\{ a, b, \dots, c\}$ by $\mathbb{Z}/2\{a, b, \dots, c\}$. 


\section{A sufficient condition for coniveau $\geq 1$}\label{sec2}

In this section, we recall the mod $2$ reduction map and the definition of coniveau filtration. Then, we state a sufficient condition for a mod $2$ cohomology class to have coniveau $\geq 1$.

\subsection{The mod $2$ reduction map} We begin with the relation between the integral and mod $2$ cohomology groups of a topological space $X$. A short exact sequence \[ \{ 0\}  \to \mathbb{Z} \stackrel{\times 2}{\longrightarrow} \mathbb{Z} \to \mathbb{Z}/2 \to \{ 0\}, \] yields a long exact sequence in cohomology \[ \cdots \to H^i(X;\mathbb{Z})\stackrel{\rho}{\longrightarrow} H^i(X;\mathbb{Z}/2)\stackrel{\delta}{\longrightarrow} H^{i+1}(X;\mathbb{Z})\to\cdots \] where $\rho$ is the mod $2$ reduction map and the composition $\rho\circ \delta$ is known as the mod $2$ Bockstein operation. 

\subsection{Definition of the coniveau filtration} For a smooth complex variety $X$, the coniveau filtration with coefficients in an abelian group $A$ is defined by \[ N^c H^{i}(X;A)=\sum_{Z\subset X} \mathrm{Ker}\, j^*\colon H^{i}(X;A)\to H^{i}(X- Z;A), \] where $Z$ ranges over the closed subvarieties of $X$ of codimension $\geq c$ and $j\colon X-Z\to X$ is the inclusion map.

\subsection{A sufficient condition for coniveau $\geq 1$} For $A=\mathbb{Z}$, it is known that any torsion integral cohomology class of a smooth complex variety has coniveau $\geq 1$. See \cite[Proposition 2.8]{benoist-ottem-2021}. An immediate consequence of this fact is the following proposition.


\begin{proposition}\label{proposition:2.1} Let $X$ be a smooth complex variety. Suppose that $H^i(X;\mathbb{Z})$ is a cyclic group generated by a cohomology class $x$ and $\rho(x)\not=0$ in $H^i(X;\mathbb{Z}/2)$. Then, $\rho(x)$ has coniveau $\geq 1$. \end{proposition}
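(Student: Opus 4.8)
The plan is to derive the proposition from the cited integral statement by pushing it through the mod $2$ reduction map, so the crux is the naturality of $\rho$. Unwinding the definition, $\rho(x)$ has coniveau $\geq 1$ exactly when $j^*\rho(x)=0$ in $H^i(X-Z;\mathbb{Z}/2)$ for some closed $Z$ of codimension $\geq 1$, where $j\colon X-Z\to X$ is the inclusion. Since $\rho$ is induced by the coefficient map $\mathbb{Z}\to\mathbb{Z}/2$, it commutes with the restriction $j^*$, so $j^*\rho(x)=\rho(j^*x)$. Hence whenever $x$ itself has coniveau $\geq 1$ as an integral class, so does $\rho(x)$; equivalently, $\rho$ sends $N^1H^i(X;\mathbb{Z})$ into $N^1H^i(X;\mathbb{Z}/2)$, and it is enough to show $x\in N^1H^i(X;\mathbb{Z})$.

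The substance is then the case where the cyclic group $H^i(X;\mathbb{Z})$ is finite, so that $x$ is torsion. In that case \cite[Proposition~2.8]{benoist-ottem-2021} gives directly that $x$ has coniveau $\geq 1$, and the previous paragraph upgrades this to $\rho(x)$. I note that the hypothesis $\rho(x)\neq 0$ plays no role in the coniveau conclusion itself---the zero class trivially has coniveau $\geq 1$---and serves only to guarantee that the resulting mod $2$ class is nonzero, which is what is used later; this finite case is exactly the one occurring in the application, where $H^3(X;\mathbb{Z})\cong\mathbb{Z}/4$.

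The point I expect to require the most care is the remaining possibility $H^i(X;\mathbb{Z})\cong\mathbb{Z}$, where $x$ is non-torsion and the quoted fact does not apply verbatim. The useful reformulation is that, restricting the long exact sequence to $U=X-Z$, the vanishing $\rho(x)|_U=0$ amounts to $x|_U$ being divisible by $2$ in $H^i(U;\mathbb{Z})$, so one must find a $Z$ on whose complement $x$ becomes $2$-divisible. For the degree relevant here this case never arises: a compact Kähler manifold has even odd-degree Betti numbers, so for $i=3$ a smooth projective $X$ cannot have $H^3(X;\mathbb{Z})\cong\mathbb{Z}$. In general an infinite cyclic $H^i$ forces $i$ even with $b_i=1$, which via hard Lefschetz gives $b_2=1$; then some nonzero multiple of $x$ is a power of an ample divisor class, that multiple restricts to $0$ on the complement $X-D$ of the divisor, so $x|_{X-D}$ is torsion, and a second application of the torsion fact on $X-D$ finishes the argument.
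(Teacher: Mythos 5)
Your core argument is exactly the paper's: by naturality of the coefficient reduction, $j^*\rho(x)=\rho(j^*x)$, so it suffices to kill $x$ itself on the complement of a codimension~$\geq 1$ closed subvariety, and this is supplied by the torsion case of \cite{benoist-ottem-2021}*{Proposition 2.8}. That is precisely how the paper proceeds (it passes directly from ``$H^i(X;\mathbb{Z})$ is cyclic'' to the existence of such a $Z$, i.e.\ it implicitly reads ``cyclic'' as ``finite cyclic'' so that $x$ is torsion), and this is the only case used in the application, where $H^3(X;\mathbb{Z})\simeq\mathbb{Z}/4$. Where you diverge is the final paragraph on the infinite cyclic case, and there your argument does not work under the stated hypotheses: the proposition is asserted for an arbitrary smooth complex variety, whereas evenness of odd Betti numbers, hard Lefschetz, and the existence of an ample class all require $X$ to be compact K\"ahler (or projective). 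In fact the statement genuinely fails in the infinite cyclic case for non-proper $X$: for $X=\mathbb{C}^*$ and $i=1$, the group $H^1(X;\mathbb{Z})\simeq\mathbb{Z}$ is cyclic, $\rho(x)\neq 0$, yet removing any finite set of points leaves the restriction of $\rho(x)$ nonzero, so $N^1H^1(X;\mathbb{Z}/2)=\{0\}$. So that paragraph should either be dropped (reading the hypothesis as finite cyclic, as the paper does) or restricted to projective $X$; as written it proves neither the general claim nor anything needed later.
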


\begin{proof} Since $H^i(X;\mathbb{Z})$ is a cyclic group, there is a closed subvariety $Z$ of codimension $\geq 1$ such that \[  x\in \mathrm{Ker}\, j^*\colon H^i(X;\mathbb{Z})\to H^{i}(X-Z;\mathbb{Z}), \] where $j\colon X-Z\to X$ is the inclusion map. Consider the following commutative diagram: \[ \begin{diagram} \node{H^{*}(X;\mathbb{Z})} \arrow{s,l}{\rho}\arrow{e,t}{j^*} \node{H^{*}(X- Z; \mathbb{Z})}\arrow{s,r}{\rho} \\ \node{H^{*}(X;\mathbb{Z}/2)} \arrow{e,t}{j^*}\node{H^{*}(X- Z;\mathbb{Z}/2).} \end{diagram} \] Then, we have $j^*\circ \rho(x)=\rho\circ j^{*}(x)=\rho(0)=0$. Hence, we have $\rho(x)\in N^1H^{i}(X;\mathbb{Z}/2)$. \end{proof}.


\section{A sufficient condition for strong coniveau $<1$}\label{sec3} In this section, firstly, we recall elementary properties of Steenrod operations. Secondly, we recall the definition and elementary properties of Milnor operations. Thirdly, we recall the definition of the Gysin pushforward map for a proper morphism of smooth complex varieties and show that Milnor operations commute with the Gysin pushforward map. Finally, we recall the definition of the strong coniveau filtration for a smooth complex variety and state a sufficient condition for a mod $2$ cohomology class to have strong coniveau $<1$.


\subsection{Steenrod operations} We begin with elementary properties of Steenrod operations. Let $X$ be a topological space. Steenrod operations $\mathrm{Sq}^i: H^j(X;\mathbb{Z}/2)\to H^{j+i}(X;\mathbb{Z}/2)$ are natural transformations of the mod $2$ cohomology groups $H^{i}(X;\mathbb{Z}/2)$ as functors from the category of topological spaces to that of graded $\mathbb{Z}/2$ modules. They satisfy the unstable condition \begin{align*} \mathrm{Sq}^i (x)&=x^2\quad \mbox{if $\deg x=i$, and} \\ \mathrm{Sq}^i (x)&=0 \quad\mbox{if $\deg x<i,$} \end{align*} and the Cartan formula \begin{align*} \mathrm{Sq}^i (x \cdot y)&=\sum_{j=0}^{i} \mathrm{Sq}^{i-j}(x) \cdot \mathrm{Sq}^j(y) \end{align*} where $\mathrm{Sq}^0$ is the identity map. In addition, we use the Adem relation $\mathrm{Sq}^1 \mathrm{Sq}^2=\mathrm{Sq}^3$.

\subsection{Milnor operations} Next, we recall the definition of Milnor operations $$Q_i\colon H^j(X;\mathbb{Z}/2)\to H^{j+2^{i+1}-1}(X;\mathbb{Z}/2).$$ Milnor operations $Q_i$ are defined inductively by $Q_0:=\mathrm{Sq}^1$, and $Q_i:=\mathrm{Sq}^{2^i} Q_{i-1}+Q_{i-1} \mathrm{Sq}^{2^i}$ for $i\geq 1$. It is well-known that $Q_i$ acts on the mod $2$ cohomology ring $H^{*}(X;\mathbb{Z}/2)$ as a derivation, so that we have \[ Q_i(x\cdot y)=Q_i(x)\cdot y+ x \cdot Q_i(y) \] for $x, y$ in $H^{*}(X;\mathbb{Z}/2)$. See Remark after Lemma 9 in \cite{milnor-1958}. We use the following property of Milnor operations in the proof of Proposition~\ref{proposition:3.3}.


\begin{proposition}\label{proposition:3.1} Suppose that $x_1\in H^{1}(X;\mathbb{Z}/2)$. Then, we have $Q_i (x_1^{2j+1})=x_1^{2j+2^{i+1}}$. \end{proposition}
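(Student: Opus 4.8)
The plan is to reduce the claim to the single computation of $Q_i$ on the degree-one class $x_1$ itself, and then propagate the answer through the odd power $x_1^{2j+1}$ using that $Q_i$ acts as a derivation over $\mathbb{Z}/2$.

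First I would compute $Q_i(x_1)$ by induction on $i$. For the base case $i=0$, since $Q_0=\mathrm{Sq}^1$ and $\deg x_1=1$, the unstable condition gives $Q_0(x_1)=\mathrm{Sq}^1(x_1)=x_1^2=x_1^{2^{1}}$. For the inductive step with $i\geq 1$, assume $Q_{i-1}(x_1)=x_1^{2^i}$. Using the defining recursion $Q_i=\mathrm{Sq}^{2^i}Q_{i-1}+Q_{i-1}\mathrm{Sq}^{2^i}$, I would note that $\mathrm{Sq}^{2^i}(x_1)=0$ because $\deg x_1=1<2^i$, so the second summand annihilates $x_1$. For the first summand, the class $x_1^{2^i}$ has degree exactly $2^i$, so the unstable condition gives $\mathrm{Sq}^{2^i}(x_1^{2^i})=(x_1^{2^i})^2=x_1^{2^{i+1}}$. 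Hence $Q_i(x_1)=x_1^{2^{i+1}}$.

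Next I would record the key vanishing: for any class $z$, the derivation property together with characteristic $2$ gives $Q_i(z^2)=2\,z\,Q_i(z)=0$. Writing $x_1^{2j+1}=x_1\cdot(x_1^{j})^2$ and applying the Leibniz rule yields $Q_i(x_1^{2j+1})=Q_i(x_1)\cdot(x_1^{j})^2+x_1\cdot Q_i((x_1^{j})^2)$. The second term vanishes by the preceding observation, and the first equals $x_1^{2^{i+1}}\cdot x_1^{2j}=x_1^{2j+2^{i+1}}$, which is the asserted identity.

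I expect no serious obstacle, as the argument is a short formal computation. The only points requiring care are the separate treatment of the base case $i=0$, where the recursion does not yet apply, versus the inductive step $i\geq 1$, where the vanishing $\mathrm{Sq}^{2^i}(x_1)=0$ holds since $2^i>1$; and the correct invocation of the unstable condition $\mathrm{Sq}^{2^i}(x_1^{2^i})=(x_1^{2^i})^2$, which relies on $x_1^{2^i}$ having degree exactly $2^i$.
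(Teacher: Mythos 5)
Your proposal is correct and follows essentially the same route as the paper: both reduce to the computation $Q_i(x_1)=x_1^{2^{i+1}}$ by induction on $i$ using the recursion and the unstable conditions, and both kill the even-power contribution via the derivation property in characteristic $2$ (the paper writes $x_1^{2j+1}=x_1\cdot x_1^j\cdot x_1^j$ and cancels two equal terms, which is the same observation as your $Q_i(z^2)=0$).
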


\begin{proof} Since $Q_i$ acts as a derivation, we have \begin{align*} Q_i(x_1\cdot x_1^j \cdot x_1^j)&=Q_i(x_1) \cdot x_1^j \cdot x_1^j +x_1 \cdot Q_i(x_1^j) \cdot x_1^j + x_1\cdot x_1^j \cdot Q_i(x_1^j) \\ & =Q_i(x_1)\cdot x_1^{j}\cdot x_1^j \\ &= Q_i(x) \cdot x_1^{2j}. \end{align*} We show that $Q_i(x_1)=x_1^{2^{i+1}}$ by induction on $i$. For $i=0$, by the definition of $Q_0$ and the unstable condition $\mathrm{Sq}^1 (x_1)=x_1^2$, we have \[ Q_0(x_1)=\mathrm{Sq}^1 (x_1)=x_1^2. \] For $i\geq 1$, by the definition of the Milnor operation $Q_i$, the unstable condition $\mathrm{Sq}^{2^{i}}(x_1)=0$, the inductive hypothesis $Q_{i-1}(x_1)=x_1^{2^{i}}$ and the unstable condition $\mathrm{Sq}^{2^i}(x_1^{2^i})=x_1^{2^{i+1}}$, we have \[ Q_i(x_1)=(\mathrm{Sq}^{2^i} Q_{i-1}+Q_{i-1}\mathrm{Sq}^{2^i}) (x_1)=\mathrm{Sq}^{2^i} Q_{i-1} (x_1)=\mathrm{Sq}^{2^i} (x_1^{2^i})=x_1^{2^{i+1}}. \] Therefore, for all $i\geq 0$, we have $Q_1(x_1)=x_1^{2^{i+1}}$. Combining this with $Q_i(x_1^{2j+1})=Q_i(x_1) \cdot x_1^{2j}$, we obtain the desired result. \end{proof}


\subsection{The Gysin pushforward map} We recall the definition of the Gysin pushforward map. Let $X$ be a $C^\infty$-manifold. Let  $\xi \to X$ be a $2k$-dimensional vector bundle over $X$. We denote the total and base spaces by $\xi$, $X$, respectively. We denote by $D(\xi)$, $S(\xi)$ the total spaces of associated disk and sphere bundles so that $S(\xi)\subset D(\xi)$. Suppose $\xi$ has an almost complex structure. Then, we have a classifying map, unique up to homotopy, $c(\xi)\colon X\to BU(k)$  so that $\xi$ is isomorphic to the pull-back of the universal $k$-dimensional complex vector bundle $\xi_k\to BU(k)$ by $c(\xi)$. We write $u_{2k}(\xi)$ for the Thom class of $\xi$. Then, $u_{2k}(\xi)=c(\xi)^{*}(u_{2k}(\xi_k))$. We denote by $t(\xi)\colon H^i(X;\mathbb{Z}/2) \to H^{i+2k}(D(\xi), S(\xi);\mathbb{Z}/2)$ the Thom isomorphism. The Thom isomorphism is given by the multiplication by the Thom class, $x\mapsto x \cdot u_{2k}(\xi)$.

Let $\mu=X \times \mathbb{R}^{2n}$. We regard the projection map to the first factor $\mu\to X$ as a trivial $2n$-dimensional vector bundle over $X$. By Whitney's embedding theorem, every $C^\infty$-manifold $Y$ admits a closed embedding $\iota \colon Y\to \mathbb{R}^{2n}$, see e.g. \cite[Theorem 2.6]{adachi-1993}, \cite[Theorem 6.15]{lee-2013}. Moreover, if $n$ is sufficiently large, it is unique up to isotopy. Therefore, if $f\colon Y\to X$ is a proper $C^\infty$-map, $f$ factors through a closed embedding $(f,\iota)\colon Y \to \mu$ and if $n$ is sufficiently large, the closed embedding $(f,\iota)$ is unique up to isotopy. Let $\nu \to Y$ be a normal bundle of the closed embedding $(f,\iota)$. By taking a tubular neighborhood of $Y$ in $\mu$, we identify $D(\nu)$ as a closed subset of $\mu$. Since $\mu$ is diffeomorphic to the interior of $D(\mu)$, $\mathrm{int}\, D(\mu)=D(\mu)-S(\mu)$, we may assume that $D(\nu)\subset D(\mu)$ without loss of generality.  Thus, we have the following inclusion maps.
\[ (D(\nu), S(\nu)) \stackrel{j(\nu)}{\longrightarrow} (\mu, \mu- \mathrm{int}\, D(\nu)) \stackrel{j(\nu, \mu)}{\longleftarrow} (\mu, \mu- \mathrm{int}\, D(\mu))\stackrel{j(\mu)}{\longleftarrow} (D(\mu), S(\mu)). \] By the excision theorem, the induced homomorphisms $j(\mu)^*$, $j(\nu)^{*}$ are isomorphisms. 

Suppose that $f\colon Y\to X$ is a proper morphism of smooth complex varieties and $n$ is sufficiently large. Then, the normal bundle $\nu\to Y$ of the closed embedding $(f,\iota)$ has an almost complex structure. Let $j=\dim X-\dim Y$ where $\dim$ is the dimension of a smooth complex variety, not the dimension of the underlying $C^\infty$-manifold. Let $c(\nu)\colon Y\to BU(j+n)$ be the classifying map of the normal bundle $\nu$. We may define the Gysin pushforward map $f_*\colon H^{i-2j}(Y;\mathbb{Z}/2)\to H^{i}(X;\mathbb{Z}/2)$ as the composition \[ t(\mu)^{-1}\circ j(\mu)^* \circ j(\nu,\mu)^* \circ (j(\nu)^{*})^{-1} \circ t(\nu). \]

The following fact is known. See the comment after Lemma 2.1 in \cite{yagita-2023}. For the reader's convenience, we prove it here.


\begin{proposition}\label{proposition:3.2} Let $X$ and $Y$ be smooth complex varieties. Let $j=\dim X-\dim Y$. For a proper morphism of smooth complex varieties $f\colon Y \to X$, Milnor operations $Q_i$ commute with the Gysin pushforward map $f_*:H^{i-2j}(Y;\mathbb{Z}/2)\to H^{i}(X;\mathbb{Z}/2)$. \end{proposition}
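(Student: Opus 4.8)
The plan is to show that the Milnor operation $Q_i$ commutes with each of the five maps whose composition defines $f_*$, namely the Thom isomorphisms $t(\nu)$, $t(\mu)^{-1}$, the excision isomorphisms $(j(\nu)^*)^{-1}$, $j(\mu)^*$, and the restriction map $j(\nu,\mu)^*$. Since $Q_i$ is a natural transformation of mod $2$ cohomology functors, it commutes with every map induced by a map of pairs; this immediately handles $j(\mu)^*$, $j(\nu)^*$ (hence its inverse), and $j(\nu,\mu)^*$. Thus the entire content of the proposition reduces to understanding how $Q_i$ interacts with the Thom isomorphism $t(\xi)$ for an almost complex bundle $\xi$.

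First I would record the naturality of $Q_i$ with respect to maps of pairs, so that the three excision-type maps pose no difficulty. The remaining and essential step is to compute $Q_i(t(\xi)(x))$ in terms of $t(\xi)(Q_i(x))$. Writing $t(\xi)(x) = x \cdot u_{2k}(\xi)$ and using that $Q_i$ acts as a derivation, we get
\[
Q_i(x \cdot u_{2k}(\xi)) = Q_i(x)\cdot u_{2k}(\xi) + x \cdot Q_i(u_{2k}(\xi)).
\]
So I must evaluate $Q_i(u_{2k}(\xi))$ on the Thom class. The key point is that for an almost complex (hence orientable, in fact complex-orientable) bundle, the Milnor operation annihilates the Thom class: I claim $Q_i(u_{2k}(\xi)) = 0$. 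Granting this, the second term vanishes and we obtain $Q_i(t(\xi)(x)) = t(\xi)(Q_i(x))$, i.e. $Q_i$ commutes with $t(\xi)$, which is exactly what is needed.

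The main obstacle is therefore establishing $Q_i(u_{2k}(\xi)) = 0$. Since $u_{2k}(\xi) = c(\xi)^*(u_{2k}(\xi_k))$ is pulled back from the universal Thom class on $MU(k)$ (the Thom space of $\xi_k \to BU(k)$), naturality of $Q_i$ reduces the claim to showing $Q_i(u_{2k}(\xi_k)) = 0$ in $H^*(MU(k);\mathbb{Z}/2)$. This in turn follows from the structure of the mod $2$ cohomology of $MU(k)$: the Thom class generates $H^{2k}(MU(k);\mathbb{Z}/2) \cong \mathbb{Z}/2$, and under the Thom isomorphism $Q_i(u_{2k}(\xi_k))$ corresponds to the class $Q_i(1) = 0$, since $Q_i$ is a derivation with $Q_i(1)=0$ and annihilates the unit. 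More concretely, one uses that the mod $2$ Wu-formula-type computation gives $\mathrm{Sq}^r(u_{2k}) = w_r(\xi)\cdot u_{2k}$ where $w_r$ is the $r$-th Stiefel--Whitney class; for an almost complex bundle the relevant odd Stiefel--Whitney classes vanish, and unwinding the inductive definition $Q_0 = \mathrm{Sq}^1$, $Q_i = \mathrm{Sq}^{2^i}Q_{i-1} + Q_{i-1}\mathrm{Sq}^{2^i}$ shows each $Q_i u_{2k}$ is expressed through $w_1(\xi) = 0$ and hence vanishes. I would present the cleanest available version of this vanishing, reducing to the universal case and invoking the derivation property together with $Q_i(1)=0$ after applying the Thom isomorphism.
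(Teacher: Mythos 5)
Your proof follows essentially the same route as the paper: decompose $f_*$ into its five constituent maps, dispose of the restriction/excision maps by naturality of $Q_i$, and reduce the Thom isomorphisms to the vanishing of $Q_i$ on the Thom class of an almost complex bundle via the derivation property. One caveat: the sentence asserting that ``under the Thom isomorphism $Q_i(u_{2k}(\xi_k))$ corresponds to the class $Q_i(1)=0$'' is circular as written, since it presupposes exactly the commutation being proved; the correct degree argument, which the paper uses, is that $Q_i(u_{2k}(\xi_k))$ lies in an odd-degree cohomology group of the Thom space of $\xi_k\to BU(k)$, which vanishes because $H^{*}(BU(k);\mathbb{Z}/2)$ is concentrated in even degrees. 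Your subsequent Wu-formula computation (all odd Stiefel--Whitney classes of a complex bundle vanish) is a valid alternative repair of that step, so the argument as a whole goes through.
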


\begin{proof} By definition, $f_*= t(\mu)^{-1}\circ j(\mu)^* \circ j(\nu,\mu)^* \circ {j(\nu)^*}^{-1}\circ t(\nu)$. Since $\mu\to X$ is a trivial vector bundle, the Thom isomorphism $t(\mu)$ commutes with Steenrod operations. Thus, all homomorphisms in the above composition commute with Steenrod operations except for the Thom isomorphism $t(\nu)$. Hence, they commute with Milnor operations. We show that $t(\nu)$ also commutes with Milnor operations.

The Thom isomorphism $t(\nu)$ is defined as the multiplication by the Thom class $u_{2j+2n}(\nu)=c(\nu)^*(u_{2j+2n}(\xi_{j+n}))$. Since \[ H^{2k+1}(D(\xi_{j+n}), S(\xi_{j+n});\mathbb{Z}/2)\simeq H^{2k+1-2(j+n)}(BU(j+n);\mathbb{Z}/2)\simeq  \{0\} \] for all $k\geq 0$, Milnor operations $Q_i$ act trivially on $u_{2j+2n}(\xi_{j+n})$. Hence, we have \begin{align*} Q_i(y \cdot u_{2j+2n}(\nu))&=Q_i(y) \cdot u_{2j+2n}(\nu)+y \cdot Q_i(u_{2j+2n}(\nu)) \\ &=Q_i(y) \cdot u_{2j+2n}(\nu)+y \cdot Q_i(c(\nu)^*(u_{2j+2n}(\xi_{j+n}))) \\ &=Q_i(y) \cdot u_{2j+2n}(\nu)+y \cdot c(\nu)^*(Q_i(u_{2j+2n}(\xi_{j+n}))) \\ &=Q_i(y) \cdot u_{2j+2n}(\nu). \end{align*} Therefore, we have \[ Q_i  \circ t(\nu)=t(\nu)\circ Q_i \]and we obtain the desired result. \end{proof}


\subsection{Definition of strong coniveau filtration} The strong coniveau filtration $\tilde{N}^cH^i(X;A)$ is defined as follow: For an $m$-dimensional smooth complex variety $X$, \[ \tilde{N}^cH^i(X;A)=\sum_{f\colon Y\to X} \mathrm{Im}\, f_*\colon H^{i-2j}(Y;A)\to H^i(X;A) \] where $f$ ranges over the proper morphisms with $j=m-\dim Y\geq c$.


\subsection{A sufficient condition for strong coniveau $<1$} Let us consider an ideal $I$ of the mod $2$ cohomology ring $H^{*}(X;\mathbb{Z}/2)$ invariant under the action of Milnor operations. Then, Milnor operations act on the quotient ring $H^{*}(X;\mathbb{Z}/2)/I$. 


\begin{proposition}\label{proposition:3.3} Let $X$ be a smooth complex variety. Let $I$ be an ideal of the mod $2$ cohomology ring $H^{*}(X;\mathbb{Z}/2)$ invariant under the action of Milnor operations. Suppose that $H^3(X;\mathbb{Z}/2)\simeq \mathbb{Z}/2$ and generated by $x_3$. If $Q_2(x_3)\not= 0$ in $H^{*}(X;\mathbb{Z}/2)/I$ and if $(H^*(X;\mathbb{Z}/2)/I)^{7}=\{0\},$ then, $x_3$ has coniveau $<1$. \end{proposition}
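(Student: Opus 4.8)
The plan is to prove the contrapositive: assuming that $x_3$ has strong coniveau $\geq 1$, I will show that $Q_2(x_3)\in I$, contradicting the hypothesis $Q_2(x_3)\neq 0$ in $H^*(X;\mathbb{Z}/2)/I$. So suppose $x_3\in \tilde{N}^1 H^3(X;\mathbb{Z}/2)$. By the definition of the strong coniveau filtration, $x_3=\sum_\alpha f_{\alpha*}(y_\alpha)$ for finitely many proper morphisms $f_\alpha\colon Y_\alpha\to X$ of smooth complex varieties with $j_\alpha=\dim X-\dim Y_\alpha\geq 1$ and classes $y_\alpha\in H^{3-2j_\alpha}(Y_\alpha;\mathbb{Z}/2)$.

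First I would do the degree bookkeeping that pins down the shape of each surviving term. Since $H^{3-2j_\alpha}(Y_\alpha;\mathbb{Z}/2)=\{0\}$ whenever $3-2j_\alpha<0$, only the morphisms with $j_\alpha=1$ contribute, and for these $y_\alpha\in H^1(Y_\alpha;\mathbb{Z}/2)$. Applying $Q_2$ and using that Milnor operations commute with the Gysin pushforward map (Proposition~\ref{proposition:3.2}), I obtain
\[ Q_2(x_3)=\sum_\alpha f_{\alpha*}(Q_2(y_\alpha)). \]
Because each $y_\alpha$ has degree $1$, Proposition~\ref{proposition:3.1} gives $Q_2(y_\alpha)=y_\alpha^{8}$, so $Q_2(x_3)=\sum_\alpha f_{\alpha*}(y_\alpha^{8})$.

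The key step, and the place where the hypothesis $(H^*(X;\mathbb{Z}/2)/I)^7=\{0\}$ enters, is to route this class through degree $7$. Proposition~\ref{proposition:3.1} also yields $Q_1(y_\alpha^{5})=y_\alpha^{8}$, so $y_\alpha^{8}=Q_1(y_\alpha^{5})$; commuting $Q_1$ back out through each $f_{\alpha*}$ (Proposition~\ref{proposition:3.2}) gives
\[ Q_2(x_3)=\sum_\alpha f_{\alpha*}(Q_1(y_\alpha^{5}))=Q_1\Bigl(\sum_\alpha f_{\alpha*}(y_\alpha^{5})\Bigr). \]
Now $y_\alpha^{5}\in H^5(Y_\alpha;\mathbb{Z}/2)$ and $f_{\alpha*}$ raises degree by $2j_\alpha=2$, so the inner sum $\sum_\alpha f_{\alpha*}(y_\alpha^{5})$ lies in $H^7(X;\mathbb{Z}/2)$, which is contained in $I$ by the degree-$7$ vanishing hypothesis. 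Since $I$ is invariant under Milnor operations, applying $Q_1$ keeps us inside $I$, so $Q_2(x_3)\in I$, the desired contradiction.

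I expect the only genuinely nonobvious point to be the factorization $Q_2(y_\alpha)=Q_1(y_\alpha^{5})$ valid for a degree-one class, which is exactly what makes the degree-$7$ hypothesis bite: it rewrites the degree-$10$ class $Q_2(x_3)$ as $Q_1$ of a degree-$7$ pushforward. Everything else is degree bookkeeping together with the power formula and the commutation already established in Propositions~\ref{proposition:3.1} and~\ref{proposition:3.2}. A secondary point to verify carefully is that only the terms with $j_\alpha=1$ survive, so that the relevant powers $y_\alpha^{8}$ and $y_\alpha^{5}$ are indeed powers of degree-one classes and hence governed by Proposition~\ref{proposition:3.1}.
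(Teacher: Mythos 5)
Your proposal is correct and follows essentially the same route as the paper: reduce to the $j=1$ case by degree reasons, use $Q_2(y)=Q_1(y^5)=y^8$ for a degree-one class (Proposition~\ref{proposition:3.1}) together with the commutation of Milnor operations with the Gysin pushforward (Proposition~\ref{proposition:3.2}), and kill the resulting class via the degree-$7$ vanishing of $H^*(X;\mathbb{Z}/2)/I$. The only cosmetic difference is that the paper argues one morphism at a time (showing each $f_*(y_1)=\alpha x_3$ forces $\alpha=0$), whereas you handle the finite sum defining $\tilde{N}^1$ all at once; both are valid.
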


\begin{proof} Suppose that $\dim X=m$. Let $Y$ be an $(m-j)$-dimensional smooth complex variety and $f\colon Y \to X$ be a proper morphism such that $j\geq 1$. We show that $x_3\not=0 \in H^{3}(X;\mathbb{Z}/2)$ is not in the image of the Gysin pushforward map $f_*$. The Gysin pushforward map is of degree $2j$, so that $f_*\colon H^{3-2j}(Y;\mathbb{Z}/2)\to H^3(X;\mathbb{Z}/2)$. For $j>1$, $3-2j<0$, and so the image of the Gysin pushforward map is trivial. Hence, we only need to deal with the case $j=1$. Suppose $j=1$. Then, the Gysin pushforward map is $f_*\colon H^1(Y;\mathbb{Z}/2)\to H^3(X;\mathbb{Z}/2)$. Let $y_1$ be an element in $H^1(Y;\mathbb{Z}/2)$. Suppose that $f_*(y_1)=\alpha x_3$ for some scalar $\alpha\in \mathbb{Z}/2$. We show that $\alpha=0$.

By Proposition~\ref{proposition:3.1}, we have $Q_2(y_1)=Q_1(y_1^5)=y_1^8$. Applying the Gysin pushforward map and using Proposition~\ref{proposition:3.2}, we have $Q_2\circ  f_*(y_1)= Q_1 \circ f_*(y_1^5)$ in $H^{10}(X;\mathbb{Z}/2)$. On the one hand, we have $Q_2 \circ f_*(y_1)= \alpha Q_2 (x_3)$ in $H^{*}(X;\mathbb{Z}/2)/I$. On the other hand, since $(H^*(X;\mathbb{Z}/2)/I)^7=\{0\}$, we have $Q_1\circ  f_*(y_1^5)= 0$ in $H^*(X;\mathbb{Z}/2)/I$. Therefore, we have $\alpha Q_2(x_3)=0$ in $H^{*}(X;\mathbb{Z}/2)/I$. Since $Q_2(x_3)\not=0$ in $H^{*}(X;\mathbb{Z}/2)/I$, we obtain $\alpha=0$. \end{proof}


\section{Proof of Theorem~\ref{theorem:1.1}}\label{sec4} In this section, first, we recall Ekedahl's theorem on smooth projective complex varieties approximating the classifying space of a compact Lie group. Next, we recall the mod $2$ cohomology ring of the classifying space of the compact Lie group $PU(4)$. Then, we show the mod $2$ cohomology of the smooth projective complex variety $X$ with an $m$-equivalence $X\to B(PU(4)\times S^1)$ satisfies the conditions in Propositions~\ref{proposition:2.1} and \ref{proposition:3.3} when $m>10$ to complete the proof of Theorem~\ref{theorem:1.1}.


\subsection{Approximation of classifying space} The complexification $G_{\mathbb{C}}$ of a connected compact Lie group $G$ is a connected reductive complex Lie group. The Lie group $G$ naturally embeds in its complexification as a real Lie subgroup, and is a maximal compact subgroup of $G_\mathbb{C}$. Furthermore, the inclusion map $G\to G_\mathbb{C}$ is a homotopy equivalence. Thus, regarding the cohomology of classifying spaces, there is no difference between compact and reductive complex Lie groups. Approximations of the classifying space of a reductive complex linear algebraic group by smooth projective complex varieties are first obtained by  Ekedahl \cite{ekedahl-2009} and improved by Pirutka and Yagita \cite{pirutka-yagita-2015}, Antieau \cite{antieau-2016}, and Benoist \cite{benoist-2025}. Replacing, in Proposition 4.4 of \cite{benoist-2025}, a connected reductive complex Lie group and $\mathbb{C}^*=\mathbb{C}- \{0\}$ by a connected compact Lie group and $S^1$, respectively,  we have the following proposition.


\begin{proposition}\label{proposition:4.1} Let $G$ be a connected compact Lie group and $m$ be a positive integer. Then a smooth projective complex variety $X$ of dimension $m$ with an $m$-equivalence $g\colon X\to B(G \times S^1)$ exists.\end{proposition}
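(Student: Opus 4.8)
The plan is to deduce Proposition~\ref{proposition:4.1} from Proposition 4.4 of \cite{benoist-2025} by passing from the reductive complex algebraic setting to the compact Lie setting, using the homotopy equivalences recalled just before the statement. Concretely, given a connected compact Lie group $G$, I would first pass to its complexification $G_\mathbb{C}$, which is a connected reductive complex linear algebraic group, and replace the factor $S^1$ by $\mathbb{C}^*$. Benoist's Proposition 4.4, applied to the reductive group $G_\mathbb{C}\times \mathbb{C}^*$ and the chosen integer $m$, then furnishes a smooth projective complex variety $X$ of dimension $m$ together with an $m$-equivalence $g\colon X\to B(G_\mathbb{C}\times\mathbb{C}^*)$. (I would double-check that Benoist's statement indeed produces the classifying space of the given reductive group with the stated dimension and connectivity bound, since that is the hypothesis we are invoking.)

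The second step is to identify $B(G_\mathbb{C}\times\mathbb{C}^*)$ with $B(G\times S^1)$ up to homotopy equivalence. Since the inclusion $G\hookrightarrow G_\mathbb{C}$ of the maximal compact subgroup is a homotopy equivalence of Lie groups, and likewise $S^1\hookrightarrow\mathbb{C}^*$ is a homotopy equivalence, the product inclusion $G\times S^1\hookrightarrow G_\mathbb{C}\times\mathbb{C}^*$ is a homotopy equivalence of topological groups. The classifying space construction $B(-)$ is homotopy invariant for such maps, so the induced map $B(G\times S^1)\to B(G_\mathbb{C}\times\mathbb{C}^*)$ is a homotopy equivalence. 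Composing a homotopy inverse of this map with Benoist's $g$ yields a map $X\to B(G\times S^1)$.

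The third step is to verify that this composite remains an $m$-equivalence. An $m$-equivalence composed with a homotopy equivalence is again an $m$-equivalence, since a homotopy equivalence is an $\infty$-equivalence and the property of inducing isomorphisms on $\pi_i$ for $i<m$ and a surjection for $i=m$ is preserved under such composition. Hence the composite $X\to B(G\times S^1)$ is the desired $m$-equivalence, and $X$ has dimension $m$, completing the argument.

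I do not expect any genuine obstacle here; the statement is essentially a translation lemma. The only point requiring care is the bookkeeping in the first step: confirming that the cited Proposition 4.4 of \cite{benoist-2025} has exactly the form ``for every connected reductive complex Lie group $H$ and every $m$ there is a smooth projective $X$ of dimension $m$ with an $m$-equivalence $X\to BH$,'' so that the substitution $H=G_\mathbb{C}\times\mathbb{C}^*$ is legitimate. If Benoist's formulation differs slightly (for instance, if it is phrased for $\mathbb{C}^*$-bundles or uses a different connectivity convention), the adaptation would amount to matching conventions rather than proving anything substantive.
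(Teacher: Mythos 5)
Your proposal is correct and matches the paper's own (one-sentence) justification: the paper likewise derives the statement from Proposition 4.4 of \cite{benoist-2025} by replacing the connected reductive complex group and $\mathbb{C}^*$ with the compact group $G$ and $S^1$, using the homotopy equivalence $G\hookrightarrow G_\mathbb{C}$ recalled just before the proposition. The extra care you take in verifying that $B(-)$ preserves the homotopy equivalence and that composing with it preserves $m$-equivalences is exactly the bookkeeping the paper leaves implicit.
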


To consider the ideal $I$ of $H^{*}(X;\mathbb{Z}/2)$ in Proposition~\ref{proposition:3.3} for the $m$-equivalence $g\colon X\to B(G\times S^1)$ in Proposition~\ref{proposition:4.1}, let $\mathrm{in}_1 \colon G\to G\times S^1$ be the inclusion map $\mathrm{in}_1(g)=(g, 1)$ where $1\in S^1\subset \mathbb{C}$ and $\mathrm{pr}_1\colon G\times S^1\to G$ be the projection map to the first factor. Let $I$ be the ideal of $H^{*}(X;\mathbb{Z}/2)$ generated by $g^*(\mathrm{Ker}\, B\mathrm{in}_1^*)$. $I$ is invariant under the action of Steenrod operations. Therefore, it is invariant under the action of Milnor operations. Furthermore, throughout the rest of this paper, we assume that $m>10$. Since $g$ is an $m$-equivalence, and since $m>10$, the induced homomorphism $g^*\colon H^i(B(G\times S^1);A)\to H^i(X;A)$ is an isomorphism for $i\leq 10$ and any abelian group $A$.


\begin{proposition}\label{proposition:4.2} With the notations and assumptions above, the following holds{\rm :} \begin{itemize} \item[{\rm (1)}] The composition $g^*\circ B\mathrm{pr}_1^*\colon H^3(BG;\mathbb{Z})\to H^3(X;\mathbb{Z})$ is an isomorphism. Similarly, in mod $2$ cohomology, the composition $g^*\circ B\mathrm{pr}_1^*\colon H^3(BG;\mathbb{Z}/2)\to H^3(X;\mathbb{Z}/2)$ is also an isomorphism. \item[{\rm (2)}] The composition $g^*\circ B\mathrm{pr}_1^*\colon H^{i}(BG;\mathbb{Z}/2)\to  (H^*(X;\mathbb{Z}/2)/I)^i$ is an isomorphism for $i\leq 10$. \end{itemize} \end{proposition}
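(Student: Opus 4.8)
The plan is to reduce everything to the Künneth decomposition of $H^*(B(G\times S^1))$ together with an explicit identification of the ideal $\mathrm{Ker}\, B\mathrm{in}_1^*$, and then transport the resulting isomorphisms across the $m$-equivalence $g$. First I would record the structural facts about $B(G\times S^1)=BG\times BS^1$. Since $H^*(BS^1;A)$ is the polynomial ring on a single degree-$2$ class $c$ and is torsion-free, the Künneth theorem gives $H^*(B(G\times S^1);A)\cong H^*(BG;A)\otimes A[c]$ for $A=\mathbb{Z}$ and $A=\mathbb{Z}/2$, under which $B\mathrm{pr}_1^*$ is the inclusion $a\mapsto a\otimes 1$. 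The homomorphism $\mathrm{in}_1$ is the composite of $\mathrm{id}_G$ with the trivial homomorphism $G\to S^1$, so $B\mathrm{in}_1$ is homotopic to $(\mathrm{id}_{BG},*)$; hence $B\mathrm{in}_1^*$ is the projection $H^*(BG;\mathbb{Z}/2)\otimes\mathbb{Z}/2[c]\to H^*(BG;\mathbb{Z}/2)$ sending $c\mapsto 0$, and $\mathrm{Ker}\, B\mathrm{in}_1^*$ is the principal ideal $(c)$. Consequently $I$ is the principal ideal of $H^*(X;\mathbb{Z}/2)$ generated by $\bar c:=g^*(c)$, so that $I\cap H^i(X;\mathbb{Z}/2)=\bar c\cdot H^{i-2}(X;\mathbb{Z}/2)$.

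For part (1) I would use that $G$ connected makes $BG$ simply connected, so $H^1(BG;A)=0$ for both coefficient rings. In the Künneth decomposition the only contributions to degree $3$ are $H^3(BG;A)\otimes H^0(BS^1;A)$ and $H^1(BG;A)\otimes H^2(BS^1;A)$; the latter vanishes, so $B\mathrm{pr}_1^*$ is an isomorphism in degree $3$. Since $3\le 10$ and $g$ is an $m$-equivalence with $m>10$, the map $g^*$ is an isomorphism in degree $3$ as well, and the composite $g^*\circ B\mathrm{pr}_1^*$ is an isomorphism.

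For part (2) the idea is to factor $g^*\circ B\mathrm{pr}_1^*$ through the quotient by $(c)$. The inclusion $B\mathrm{pr}_1^*$ followed by reduction modulo $(c)$ is the identity on $H^*(BG;\mathbb{Z}/2)$, so $B\mathrm{pr}_1^*$ induces an isomorphism $H^*(BG;\mathbb{Z}/2)\xrightarrow{\ \cong\}H^*(B(G\times S^1);\mathbb{Z}/2)/(c)$. It then remains to show that $g^*$ induces an isomorphism $\bigl(H^*(B(G\times S^1);\mathbb{Z}/2)/(c)\bigr)^i\to (H^*(X;\mathbb{Z}/2)/I)^i$ for $i\le 10$. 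I would compare the two short exact sequences defining these quotients in degree $i$; the middle terms $H^i(B(G\times S^1);\mathbb{Z}/2)$ and $H^i(X;\mathbb{Z}/2)$ are identified by $g^*$ because $i\le 10$.

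The key point, and the step I expect to require the most care, is that $g^*$ also carries the degree-$i$ part of $(c)$, namely $c\cdot H^{i-2}(B(G\times S^1);\mathbb{Z}/2)$, bijectively onto $I\cap H^i(X;\mathbb{Z}/2)=\bar c\cdot H^{i-2}(X;\mathbb{Z}/2)$. Since $g^*$ is a ring homomorphism, it intertwines multiplication by $c$ with multiplication by $\bar c=g^*(c)$, giving a commuting square whose horizontal arrows are these two multiplication maps and whose vertical arrows are $g^*$ in degrees $i-2$ and $i$. This holds precisely because $i-2\le 8<10$, so $g^*$ is an isomorphism in degree $i-2$ as well; thus both verticals are isomorphisms, the horizontal images are exactly $(c)^i$ and $I^i$, and $g^*$ restricts to an isomorphism $(c)^i\to I^i$. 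Here the hypothesis $m>10$ is used in exactly the right range. The five lemma applied to the comparison of short exact sequences then yields the isomorphism on quotients, and composing with the isomorphism induced by $B\mathrm{pr}_1^*$ completes the proof.
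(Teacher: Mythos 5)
Your proposal is correct and follows essentially the same route as the paper: Künneth plus simple connectivity of $BG$ for part (1), and for part (2) the identification of $B\mathrm{pr}_1^*$ with the projection onto the quotient by $\mathrm{Ker}\,B\mathrm{in}_1^*$ followed by transport along $g^*$. The only difference is that you spell out the step the paper dismisses as ``clear'' --- identifying the kernel as the principal ideal $(c)$ and checking via the five lemma that $g^*$ carries its degree-$i$ part isomorphically onto $I\cap H^i(X;\mathbb{Z}/2)$ because $g^*$ is also an isomorphism in degree $i-2\le 8$ --- which is a welcome addition rather than a deviation.
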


\begin{proof} (1) There is a homotopy equivalence $B(G\times S^1)\simeq BG \times BS^1$. The integral cohomology ring of $BS^1$ is a polynomial ring generated by a degree $2$ generator over the integers $\mathbb{Z}$. Therefore, by the homotopy equivalence $B(G\times S^1)\simeq BG \times BS^1$ and the K\"{u}nneth theorem, we have \[ H^{*}(B(G\times S^1);\mathbb{Z})\simeq H^{*}(BG \times BS^1;\mathbb{Z})\simeq H^{*}(BG;\mathbb{Z})\otimes H^{*}(BS^1;\mathbb{Z}). \] Since $H^{2i+1}(BS^1;\mathbb{Z})=\{0\}$ for $i\geq 0$, we have \[ H^3(B(G\times S^1);\mathbb{Z})\simeq H^{1}(BG;\mathbb{Z}) \otimes H^2(BS^1;\mathbb{Z}) \oplus H^{3}(BG;\mathbb{Z}) \otimes H^0(BS^1;\mathbb{Z}). \] Moreover, since $BG$ is simply-connected, $H^1(BG;\mathbb{Z})=\{0\}$. Thus, we have \[ H^3(B(G\times S^1);\mathbb{Z})\simeq H^{3}(BG;\mathbb{Z})\otimes H^0(BS^1;\mathbb{Z})\simeq H^3(BG;\mathbb{Z}) \] and we obtain the desired result for integral cohomology groups. Replacing the coefficient group $\mathbb{Z}$ by $\mathbb{Z}/2$, we have the same result for the mod $2$ cohomology groups. \\ (2) Since the composition $\mathrm{pr}_1\circ \mathrm{in}_1$ is the identity map, we have an isomorphism \[ B\mathrm{in}_1^*\colon H^{*}(B(G\times S^1);\mathbb{Z}/2)/(\mathrm{Ker}\, B\mathrm{in}_1^*)\simeq H^{*}(BG;\mathbb{Z}/2). \] Therefore, the induced homomorphism \[ B\mathrm{pr}_1^*\colon H^*(BG;\mathbb{Z}/2)\to H^{*}(B(G\times S^1);\mathbb{Z}/2)/(\mathrm{Ker}\, B\mathrm{in}_1^*) \] is an isomorphism. It is also clear that the induced homomorphism \[ g^*\colon (H^{*}(B(G\times S^1);\mathbb{Z}/2)/(\mathrm{Ker}\, B\mathrm{in}_1^*))^i \to (H^{*}(X;\mathbb{Z}/2)/I)^i \] is an isomorphism for $i\leq 10$.
\end{proof}


\subsection{Cohomology of $BPU(4)$} Kono and Mimura \cite{kono-mimura-1974} computed the mod $2$ cohomology ring of the classifying space of the projective special orthogonal group $PSO(4i+2)$ for $i\geq 1$. The projective unitary group $PU(4)$ is the quotient group of the special unitary group $SU(4)$ by its center $\mathbb{Z}/4$.  The projective special orthogonal group  $PSO(6)$ is the quotient group of the spin group $\mathrm{Spin}(6)$ by its center $\mathbb{Z}/4$. The special unitary group $SU(4)$ is isomorphic to the spin group $\mathrm{Spin}(6)$. Therefore, $PU(4)$ is isomorphic to $PSO(6)$ and we have the mod $2$ cohomology ring of $BPU(4)$ from that of $BPSO(6)$. We refer the reader to (4.10) on page 103 in Toda \cite{toda-1984} for the following result.


\begin{proposition}\label{proposition:4.3} The mod $2$ cohomology ring of the  classifying space $BPU(4)$ is generated by $x_2$, $x_3$, $x_5$, $x_8$, $x_9$, $x_{12}$ with the relations $x_2 x_3=x_2 x_5=x_2 x_9=x_9^2+x_3^2 x_{12}+x_5^2x_8=0$, where subscripts indicate degrees. \end{proposition}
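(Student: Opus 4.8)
The plan is to reprove the Kono--Mimura computation \cite{kono-mimura-1974} that Toda records, rather than merely quoting it, by running the mod $2$ Serre spectral sequence of a single fibration. The cleanest choice is the one arising from the central extension $1\to S^1\to U(4)\to PU(4)\to 1$, which exhibits $BPU(4)$ as the total space of a fibration with fiber $BU(4)$ and base $B^2S^1=K(\mathbb{Z},3)$, the classifying map $BPU(4)\to K(\mathbb{Z},3)$ sending the fundamental class $\iota_3$ to the order-$4$ torsion class in $H^3(BPU(4);\mathbb{Z})$. Using the isomorphism $PU(4)\simeq PSO(6)$ recorded above, one could instead take the extension $1\to\mathbb{Z}/2\to SO(6)\to PSO(6)\to 1$ with base $K(\mathbb{Z}/2,2)$, which is closer to Kono--Mimura's original route. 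The two inputs are classical: $H^*(BU(4);\mathbb{Z}/2)=\mathbb{Z}/2[c_1,c_2,c_3,c_4]$, and $H^*(K(\mathbb{Z},3);\mathbb{Z}/2)$ is polynomial on the admissible classes $\mathrm{Sq}^I\iota_3$ of excess $<3$ whose last entry is not $\mathrm{Sq}^1$, with bottom generators $\iota_3$, $\mathrm{Sq}^2\iota_3$, $\mathrm{Sq}^4\mathrm{Sq}^2\iota_3$ in degrees $3$, $5$, $9$.

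First I would observe that, since the torsion class has order $4$, the integral transgression of $c_1$ is $4\iota_3$ up to sign; hence $c_1$ is a permanent cycle mod $2$ and supplies the degree-$2$ generator $x_2$ (in agreement with $H_2(BPU(4);\mathbb{Z})\simeq\mathbb{Z}/4$, whence $H^2(BPU(4);\mathbb{Z}/2)\simeq\mathbb{Z}/2$). The base classes $\iota_3$, $\mathrm{Sq}^2\iota_3$, $\mathrm{Sq}^4\mathrm{Sq}^2\iota_3$ survive to give $x_3$, $x_5$, $x_9$, while $c_4$ and a suitable degree-$12$ class give $x_8$ and $x_{12}$. The Chern classes $c_2$ and $c_3$, by contrast, must die: the expected differential $d_3(c_2)=c_1\iota_3$ both removes $c_2$ and forces $c_1\iota_3=0$ on $E_\infty$, which is precisely the relation $x_2x_3=0$; the relations $x_2x_5=0$ and $x_2x_9=0$ arise in the same way, as the products of $c_1$ with the higher base generators are hit by differentials off $c_3$ and further fiber classes. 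I would fix the remaining differentials using the Kudo transgression theorem (Steenrod operations commute with transgression), the Wu formulas in the $PSO(6)$ model, and naturality under restriction to a maximal torus and to the non-toral elementary abelian $2$-subgroups of $PU(4)$.

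The main obstacle is the passage from the additive answer to the ring structure, namely the multiplicative extension problems and the verification that the listed relations are complete. The three vanishing relations are nilpotent phenomena, invisible to Quillen-type detection, and I would read them off directly from the pattern of differentials above. The one genuinely delicate point is the quadratic relation $x_9^2=x_3^2x_{12}+x_5^2x_8$ in degree $18$: here $x_9^2$ is nonzero in the base but acquires a hidden expression in terms of the fiber-born classes $x_8$ and $x_{12}$ on $E_\infty$. I would pin this identity down by restricting to the maximal torus and to the maximal elementary abelian $2$-subgroups, where by Quillen's stratification theorem the cohomology embeds modulo nilpotents, so that a relation among the non-nilpotent classes can be checked by an explicit computation of Stiefel--Whitney and Chern data. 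Finally I would confirm that the candidate presentation has the correct Poincar\'e series through the relevant range, guaranteeing that there are no further generators or relations, thereby recovering the statement cited from Toda \cite{toda-1984}.
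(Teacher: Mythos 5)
The paper does not actually prove Proposition~\ref{proposition:4.3}: it deduces $PU(4)\cong PSO(6)$ and then cites the Kono--Mimura computation as recorded in Toda \cite{toda-1984}, (4.10), p.~103. You are therefore attempting something much more ambitious, namely a genuine re-derivation, and as written your sketch has both a concrete error and several steps that are announced rather than carried out. The concrete error concerns the fate of $c_4$. In the mod $2$ Serre spectral sequence of $BU(4)\to BPU(4)\to K(\mathbb{Z},3)$, the differential $d_3$ on the fiber is the derivation determined by restricting to the maximal torus, where $d_3(t_i)=\iota_3$ for every $i$; this gives $d_3(c_k)=(n-k+1)\,c_{k-1}\iota_3$ with $n=4$. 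Mod $2$ this yields $d_3(c_2)=c_1\iota_3$ (as you say) but also $d_3(c_4)=c_3\iota_3\neq 0$, so $c_4$ does \emph{not} survive and cannot "give $x_8$"; the permanent cycles on the fiber in low degrees lie in $\ker(c_1\partial_{c_2}+c_3\partial_{c_4})$, and $x_8$ restricts to $c_2^2$, not to $c_4$. Meanwhile $d_3(c_3)=2c_2\iota_3=0$, so it is $c_3$, not $c_4$, whose higher differential must be identified to produce the relation $x_2x_5=0$; note that $d_5\colon E_5^{0,6}\to E_5^{5,2}$ is not a transgression, so the Kudo theorem does not apply to it directly and this differential requires a separate argument.

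Beyond that misstep, the remaining ingredients are all deferred: the higher differentials killing $c_1\cdot\mathrm{Sq}^4\mathrm{Sq}^2\iota_3$, the identification of the degree-$12$ generator, the multiplicative extension $x_9^2=x_3^2x_{12}+x_5^2x_8$, and the completeness of the presentation. Two of these deferrals hide real difficulties. First, Quillen stratification detects cohomology only modulo nilpotents, so restriction to elementary abelian $2$-subgroups can show at best that $x_9^2+x_3^2x_{12}+x_5^2x_8$ is nilpotent, not that it vanishes; an extra argument (e.g.\ that $H^{18}$ contains no nonzero nilpotent class surviving to the relevant $E_\infty$ filtration quotient) is needed. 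Second, "confirming the correct Poincar\'e series" presupposes an independent computation of that series, which is essentially the whole content of the Kono--Mimura calculation. So the proposal is a reasonable roadmap for how the cited computation goes, but it is not a proof, and if you intend to keep the spectral-sequence route you must at minimum correct the $d_3$ bookkeeping on $c_4$ and supply the higher differentials and extension arguments explicitly. The economical alternative, which is what the paper does, is simply to invoke \cite{kono-mimura-1974} and \cite{toda-1984} via the exceptional isomorphism $SU(4)\cong\mathrm{Spin}(6)$.
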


Thus, we have the mod $2$ cohomology ring of $BPU(4)$ as follows: \[ H^{*}(BPU(4);\mathbb{Z}/2)=\mathbb{Z}/2[ x_2, x_3, x_5, x_8, x_{9}, x_{12}]/( x_2 x_3, x_2 x_5, x_2 x_9, x_9^2+x_3^2 x_{12}+x_5^2x_8). \] However, what we use in this paper is only the following: \begin{align*} H^3(BPU(4);\mathbb{Z}/2)&=\mathbb{Z}/2\{ x_3\},\\ H^5(BPU(4);\mathbb{Z}/2)&=\mathbb{Z}/2\{ x_5\}, \\ H^7(BPU(4);\mathbb{Z}/2)&=\{ 0 \} \end{align*} and $x_3^2\not=0$, $x_5^2\not=0$ in $H^{6}(BPU(4);\mathbb{Z}/2)$, $H^{10}(BPU(4);\mathbb{Z}/2)$, respectively.


\begin{proposition}\label{proposition:4.4} The generator $x_3$ of $H^3(BPU(4);\mathbb{Z}/2)\simeq \mathbb{Z}/2$ is the mod $2$ reduction of a generator of the integral cohomology group $H^3(BPU(4);\mathbb{Z})\simeq \mathbb{Z}/4$. \end{proposition}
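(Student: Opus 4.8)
The plan is to reduce the statement to the single computation $H^3(BPU(4);\mathbb{Z})\simeq \mathbb{Z}/4$ and then run the Bockstein long exact sequence from Section~\ref{sec2}, since the mod $2$ group $H^3(BPU(4);\mathbb{Z}/2)=\mathbb{Z}/2\{x_3\}$ is already recorded after Proposition~\ref{proposition:4.3}. Once both groups are known, the conclusion is formal. In the long exact sequence
\[ H^3(BPU(4);\mathbb{Z})\stackrel{\times 2}{\longrightarrow} H^3(BPU(4);\mathbb{Z})\stackrel{\rho}{\longrightarrow}H^3(BPU(4);\mathbb{Z}/2), \]
the kernel of $\rho$ is the image of multiplication by $2$, which on $\mathbb{Z}/4$ is the unique subgroup $\{0,2\}$ of order $2$. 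Hence $\mathrm{Im}\,\rho$ has order $2$ and therefore equals all of $H^3(BPU(4);\mathbb{Z}/2)\simeq\mathbb{Z}/2$, so $x_3=\rho(y)$ for some $y$. Since $\rho(y)=x_3\neq 0$ forces $y\notin\ker\rho=\{0,2\}$, the class $y$ is a generator of $\mathbb{Z}/4$; in fact every preimage of $x_3$ is a generator.

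The heart of the argument is thus $H^3(BPU(4);\mathbb{Z})\simeq \mathbb{Z}/4$. Using $PU(4)=SU(4)/(\mathbb{Z}/4)$ together with the fact that $SU(4)$ is simply connected and serves as the universal cover, with the central $\mathbb{Z}/4$ acting as deck transformations, I would first obtain $\pi_1(PU(4))\simeq \mathbb{Z}/4$. Then $BPU(4)$ is simply connected with $\pi_2(BPU(4))\simeq\pi_1(PU(4))\simeq\mathbb{Z}/4$, so the Hurewicz theorem yields $H_2(BPU(4);\mathbb{Z})\simeq\mathbb{Z}/4$. Feeding this into the universal coefficient theorem,
\[ H^3(BPU(4);\mathbb{Z})\simeq \mathrm{Hom}(H_3(BPU(4);\mathbb{Z}),\mathbb{Z})\oplus \mathrm{Ext}(H_2(BPU(4);\mathbb{Z}),\mathbb{Z}), \]
the $\mathrm{Ext}$ summand is $\mathrm{Ext}(\mathbb{Z}/4,\mathbb{Z})\simeq \mathbb{Z}/4$.

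To eliminate the $\mathrm{Hom}$ summand it suffices to show $H_3(BPU(4);\mathbb{Z})$ is finite. Since $BPU(4)$ has finitely generated homology in each degree, this amounts to $H^3(BPU(4);\mathbb{Q})=0$, which holds because for a compact connected Lie group $G$ Borel's theorem gives $H^*(BG;\mathbb{Q})\simeq H^*(BT;\mathbb{Q})^W$, a polynomial ring on even-degree generators that vanishes in every odd degree. Combining these facts gives $H^3(BPU(4);\mathbb{Z})\simeq \mathbb{Z}/4$, which completes the proof.

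The steps needing the most care are the identification $\pi_1(PU(4))\simeq\mathbb{Z}/4$ (one must check that the center killed in forming $PU(4)$ is exactly $\mathbb{Z}/4$) and the passage from $H^3(BPU(4);\mathbb{Q})=0$ to the torsion-freeness of the $\mathrm{Hom}$ term; both are standard but deserve explicit verification. As a backup for the computation of $H^3(BPU(4);\mathbb{Z})$, I would note that $\pi_2(PU(4))=0$ makes the second Postnikov map $BPU(4)\to K(\mathbb{Z}/4,2)$ a $4$-equivalence, hence an isomorphism on $H^3$, reducing the problem to $H^3(K(\mathbb{Z}/4,2);\mathbb{Z})\simeq\mathbb{Z}/4$; this avoids the rational argument entirely should the homological bookkeeping be cleaner there.
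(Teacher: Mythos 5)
Your proposal is correct, and the main computation $H^3(BPU(4);\mathbb{Z})\simeq\mathbb{Z}/4$ follows essentially the same route as the paper: pass from $\pi_1(PU(4))\simeq\mathbb{Z}/4$ to $\pi_2(BPU(4))$, apply Hurewicz to get $H_2(BPU(4);\mathbb{Z})\simeq\mathbb{Z}/4$, kill the $\mathrm{Hom}$ term of the universal coefficient theorem by a rational vanishing argument, and read off $\mathrm{Ext}^1_{\mathbb{Z}}(\mathbb{Z}/4,\mathbb{Z})\simeq\mathbb{Z}/4$. The two genuine differences are minor but worth noting. For the rational vanishing you invoke Borel's theorem $H^*(BG;\mathbb{Q})\simeq H^*(BT;\mathbb{Q})^W$, whereas the paper compares with $BSU(4)$ through the fibration $B\mathbb{Z}/4\to BSU(4)\to BPU(4)$; both are standard and correct. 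For the final step, you run the Bockstein long exact sequence: since $\ker\rho=\mathrm{im}(\times 2)$ has order $2$ in $\mathbb{Z}/4$, the image of $\rho$ has order $2$ and hence exhausts $H^3(BPU(4);\mathbb{Z}/2)\simeq\mathbb{Z}/2$, and any preimage of $x_3$ lies outside $\{0,2\}$ and is therefore a generator. The paper instead identifies $H^3(BPU(4);\mathbb{Z}/2)$ with $\mathrm{Ext}^1_{\mathbb{Z}}(H_2(BPU(4);\mathbb{Z}),\mathbb{Z}/2)$ via the universal coefficient theorem (using that the mod $2$ group is one-dimensional) and uses surjectivity of $\mathrm{Ext}^1_{\mathbb{Z}}(\mathbb{Z}/4,\mathbb{Z})\to\mathrm{Ext}^1_{\mathbb{Z}}(\mathbb{Z}/4,\mathbb{Z}/2)$. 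Your Bockstein argument is arguably cleaner, as it needs only the orders of the two groups and exactness, and avoids the naturality-in-coefficients discussion entirely; your Postnikov-section backup via $K(\mathbb{Z}/4,2)$ is also sound, since $\pi_2$ of a Lie group vanishes.
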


\begin{proof} Since $PU(4)$ is connected, we have $\pi_1(BPU(4))=\{0\}$. Considering the fiber sequence $B\mathbb{Z}/4 \to BSU(4) \to BPU(4), $ and taking account of the fact that $\pi_2(BSU(4))\simeq \{0\}$, we have $\pi_2(BPU(4))\simeq \mathbb{Z}/4.$ By the Hurewicz theorem, we have $H_2(BPU(4);\mathbb{Z})\simeq \pi_2(BPU(4))\simeq \mathbb{Z}/4$. Moreover, computing the rational homology groups \[ H_3(BPU(4);\mathbb{Q})\simeq H_3(BSU(4);\mathbb{Q})\simeq \{0\}, \] we have that the integral cohomology group $H_3(BPU(4);\mathbb{Z})$ is a torsion group and \[ \mathrm{Hom}_\mathbb{Z}(H_3(BPU(4);\mathbb{Z}), \mathbb{Z})\simeq \{0\}. \] By the universal coefficient theorem for the integral cohomology group, we obtain \[ H^3(BPU(4);\mathbb{Z})\simeq \mathrm{Ext}_\mathbb{Z}^1 (H_2(BPU(4);\mathbb{Z}), \mathbb{Z})\simeq \mathbb{Z}/4. \] By the universal coefficient theorem for the mod $2$ cohomology group, the mod $2$ cohomology group $H^3(BPU(4);\mathbb{Z}/2)$ has a direct summand \[ \mathrm{Ext}_{\mathbb{Z}}^1(H_2(BPU(4);\mathbb{Z}), \mathbb{Z}/2)\simeq \mathbb{Z}/2. \] On the other hand, by Proposition~\ref{proposition:4.3}, the dimension of $H^3(BPU(4);\mathbb{Z}/2)$ is $1$. Therefore, we have \[ H^3(BPU(4);\mathbb{Z}/2)\simeq \mathrm{Ext}_\mathbb{Z}^1 (H_2(BPU(4);\mathbb{Z}), \mathbb{Z}/2)\simeq \mathbb{Z}/2. \] The mod $2$ reduction $\mathbb{Z} \to \mathbb{Z}/2$ of coefficients yields a surjective homomorphism \[ \mathrm{Ext}_\mathbb{Z}^1 (H_2(BPU(4);\mathbb{Z}),\mathbb{Z})\to \mathrm{Ext}_\mathbb{Z}^1(H_2(BPU(4);\mathbb{Z}),\mathbb{Z}/2). \] It is equivalent to the mod $2$ reduction map $\rho\colon H^{3}(BPU(4);\mathbb{Z})\to H^3(BPU(4);\mathbb{Z}/2)$. Therefore, the generator $x_3$ of $H^{3}(BPU(4);\mathbb{Z}/2)$ is the mod $2$ reduction of a generator of $H^3(BPU(4);\mathbb{Z})\simeq \mathbb{Z}/4$. \end{proof}


\begin{proposition} \label{proposition:4.5} The following holds for $H^{*}(BPU(4);\mathbb{Z}/2)${\rm :} \begin{itemize} \item[{\rm (1)}] $Q_1 (x_3)=Q_0 \mathrm{Sq}^2 (x_3)=x_3^2\not=0$, \item[{\rm (2)}] $\mathrm{Sq}^2 (x_3)=x_5$, and \item[{\rm (3)}] $Q_2 (x_3)=x_5^2\not=0$. \end{itemize} \end{proposition}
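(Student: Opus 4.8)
The plan is to reduce everything to two inputs: the vanishing of the Bockstein on $x_3$ and the low-degree structure of $H^{*}(BPU(4);\mathbb{Z}/2)$ recorded after Proposition~\ref{proposition:4.3}. The single nonformal step, which I would establish first, is that $Q_0(x_3)=\mathrm{Sq}^1(x_3)=0$. This is where Proposition~\ref{proposition:4.4} enters: by that result $x_3=\rho(\tilde x_3)$ for a generator $\tilde x_3$ of $H^3(BPU(4);\mathbb{Z})\simeq\mathbb{Z}/4$. Since the mod $2$ Bockstein equals $\rho\circ\delta$ and since $\delta\circ\rho=0$ in the Bockstein long exact sequence, we obtain $\mathrm{Sq}^1(x_3)=\rho(\delta(\rho(\tilde x_3)))=0$.

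Given this, I would prove (1) and (2) together. By the definition $Q_1=\mathrm{Sq}^2 Q_0+Q_0\mathrm{Sq}^2$, the first summand $\mathrm{Sq}^2 Q_0(x_3)=\mathrm{Sq}^2(0)$ vanishes, so $Q_1(x_3)=Q_0\mathrm{Sq}^2(x_3)=\mathrm{Sq}^1\mathrm{Sq}^2(x_3)$. Applying the Adem relation $\mathrm{Sq}^1\mathrm{Sq}^2=\mathrm{Sq}^3$ and the unstable condition $\mathrm{Sq}^3(x_3)=x_3^2$ gives $Q_1(x_3)=x_3^2$, which is nonzero by the facts recorded after Proposition~\ref{proposition:4.3}; this is (1). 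For (2), $\mathrm{Sq}^2(x_3)$ lies in $H^5(BPU(4);\mathbb{Z}/2)=\mathbb{Z}/2\{x_5\}$, so it equals $0$ or $x_5$; if it were $0$, then $Q_0\mathrm{Sq}^2(x_3)=0\neq x_3^2$, a contradiction, and hence $\mathrm{Sq}^2(x_3)=x_5$.

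Finally, for (3) I would expand $Q_2=\mathrm{Sq}^4 Q_1+Q_1\mathrm{Sq}^4$. The unstable condition gives $\mathrm{Sq}^4(x_3)=0$ because $\deg x_3=3<4$, so the second summand vanishes and $Q_2(x_3)=\mathrm{Sq}^4(Q_1(x_3))=\mathrm{Sq}^4(x_3^2)$. By the Cartan formula the cross terms cancel in pairs mod $2$ (equivalently $\mathrm{Sq}^{2k}(y^2)=(\mathrm{Sq}^k y)^2$), leaving $\mathrm{Sq}^4(x_3^2)=(\mathrm{Sq}^2 x_3)^2=x_5^2$; this is nonzero again by the facts after Proposition~\ref{proposition:4.3}, and combined with part (2) it is exactly (3). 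The main obstacle is the very first step: all three parts are formal manipulations with the Adem relation, the Cartan formula, and the unstable condition once $\mathrm{Sq}^1(x_3)=0$ is known, so the genuine content is concentrated in deducing the vanishing of the Bockstein from the integral structure of $H^3(BPU(4);\mathbb{Z})$.
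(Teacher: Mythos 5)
Your proposal is correct and follows essentially the same route as the paper: both deduce $Q_0(x_3)=\rho\circ\delta\circ\rho(\tilde x_3)=0$ from Proposition~\ref{proposition:4.4}, then obtain (1) via $Q_1(x_3)=Q_0\mathrm{Sq}^2(x_3)=\mathrm{Sq}^3(x_3)=x_3^2$, (2) by noting $H^5$ is spanned by $x_5$ and $\mathrm{Sq}^2(x_3)$ cannot vanish, and (3) via $Q_2(x_3)=\mathrm{Sq}^4(x_3^2)=(\mathrm{Sq}^2 x_3)^2=x_5^2$. Your observation that the only nonformal input is the vanishing of the Bockstein on $x_3$ accurately reflects the structure of the paper's argument.
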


\begin{proof} (1) By Proposition~\ref{proposition:4.4}, $x_3$ is the mod $2$ reduction of an integral cohomology class $x_3'$. Hence, we have $Q_0 (x_3)=\rho\circ \delta\circ \rho(x_3')=0$. By the definition of Milnor operation $Q_1$, $Q_0(x_3)=0$,  the definition of the Milnor operation $Q_0$, the Adem relation $\mathrm{Sq}^1 \mathrm{Sq}^2=\mathrm{Sq}^3$, and the unstable condition $\mathrm{Sq}^3 (x_3)=x_3^2$, we have \[ Q_1 (x_3)=(\mathrm{Sq}^2 Q_0+Q_0 \mathrm{Sq}^2) (x_3)= Q_0 \mathrm{Sq}^2 (x_3)=\mathrm{Sq}^1 \mathrm{Sq}^2 (x_3)=\mathrm{Sq}^3 (x_3)=x_3^2. \] By Proposition~\ref{proposition:4.3}, we have $x_3^2\not=0$. \\ (2) Since $Q_0 \mathrm{Sq}^2 (x_3)\not=0$,  we have $\mathrm{Sq}^2 (x_3)\not=0.$ By Proposition~\ref{proposition:4.3}, there is only one nonzero element $x_5$ in $H^{5}(BPU(4);\mathbb{Z}/2)$. Therefore, we have $\mathrm{Sq}^2 (x_3)=x_5$. \\ (3) By the definition of $Q_2$, the unstable condition $\mathrm{Sq}^4(x_3)=0$, $Q_1(x_3)=x_3^2$, the Cartan formula, and $\mathrm{Sq}^2 (x_3)=x_5$, we have \[ Q_2(x_3)=(\mathrm{Sq}^4 Q_1+ Q_1 \mathrm{Sq}^4) (x_3)=\mathrm{Sq}^4 Q_1 (x_3)=\mathrm{Sq}^4 (x_3^2)=(\mathrm{Sq}^2 (x_3))^2=x_5^2. \] By Proposition~\ref{proposition:4.3}, we have $x_5^2\not=0$. \end{proof}


\subsection{Proof of Theorem~\ref{theorem:1.1}} Finally, we prove Theorem~\ref{theorem:1.1}.


\begin{proof}[Proof of Theorem~\ref{theorem:1.1}] Suppose that $m>10$. By Proposition~\ref{proposition:4.1}, there is an $m$-dimensional  smooth projective complex variety $X$ with an $m$-equivalence $g\colon X\to B(PU(4)\times S^1)$. Let $\mathrm{pr}_1\colon PU(4)\to S^1$ be the projection map to the first factor and $\mathrm{in}_1\colon PU(4) \to PU(4)\times S^1$ be the inclusion map $g\mapsto (g, 1)$. We define $I$ as the ideal of $H^{*}(X;\mathbb{Z}/2)$ generated by $g^*(\mathrm{Ker}\, B\mathrm{in}_1^*)$. 

By Proposition~\ref{proposition:4.4},  we have $H^{3}(BPU(4);\mathbb{Z})\simeq \mathbb{Z}/4$, $H^3(BPU(4);\mathbb{Z}/2)\simeq \mathbb{Z}/2$ and if $x_3'$ is a generator of the integral cohomology group $H^3(BPU(4);\mathbb{Z})$, then $x_3=\rho (x'_3)$. By Proposition~\ref{proposition:4.2} (1), $H^3(X;\mathbb{Z})\simeq \mathbb{Z}/4$, $H^3(X;\mathbb{Z}/2)\simeq \mathbb{Z}/2$, $g^{*}\circ B\mathrm{pr}_1^*(x_3')$ is a generator of $H^3(X;\mathbb{Z})$ and its mod $2$ reduction $\rho \circ g^{*}\circ B\mathrm{pr}_1^*(x_3')=g^{*}\circ B\mathrm{pr}_1^*(x_3)$ is also the generator of $H^3(X;\mathbb{Z}/2)$. Therefore, by Proposition~\ref{proposition:2.1}, we have $g^{*}\circ B\mathrm{pr}_1^*(x_3) \in N^1H^3(X;\mathbb{Z}/2)$.

By Propositions \ref{proposition:4.2} (2) and \ref{proposition:4.5} (3), we have\[ Q_2(g^* \circ B\mathrm{pr}_1^*(x_3))=g^* \circ B\mathrm{pr}_1^*(x_5^2)\not=0 \] in \[ (H^{*}(X;\mathbb{Z}/2)/I)^{10}\simeq H^{10}(BPU(4);\mathbb{Z}/2). \] By Propositions~\ref{proposition:4.2} (2) and \ref{proposition:4.3}, we have \[ (H^{*}(X;\mathbb{Z}/2)/I)^7\simeq H^{7}(BPU(4);\mathbb{Z}/2)\simeq \{0\}. \] Hence, by Proposition~\ref{proposition:3.3}, we have $g^{*}\circ B\mathrm{pr}_1^*(x_3)\not\in \tilde{N}^1H^3(X;\mathbb{Z}/2). $

Thus, the inclusion $\tilde{N}^1H^3(X;\mathbb{Z}/2)\subset N^1H^3(X;\mathbb{Z}/2)$ is strict. \end{proof}



\BibSpec{arXiv}{%
  +{}{\PrintAuthors}{author}
  +{,}{ \textit}{title}
  +{}{ \parenthesize}{date}
  +{,}{ arXiv:}{eprint}
}

\begin{bibdiv}[References]
\begin{biblist}

\bib{adachi-1993}{book}{
   author={Adachi, Masahisa},
   title={Embeddings and immersions},
   series={Translations of Mathematical Monographs},
   volume={124},
   note={Translated from the 1984 Japanese original by Kiki Hudson},
   publisher={American Mathematical Society, Providence, RI},
   date={1993},
   pages={x+183},
   isbn={0-8218-4612-4},
   doi={10.1090/mmono/124},
}

\bib{antieau-2016}{article}{
   author={Antieau, Benjamin},
   title={On the integral Tate conjecture for finite fields and
   representation theory},
   journal={Algebr. Geom.},
   volume={3},
   date={2016},
   number={2},
   pages={138--149},
   issn={2313-1691},
   doi={10.14231/AG-2016-007},
}

\bib{benoist-2025}{article}{
   author={Benoist, Olivier},
   title={Steenrod operations and algebraic classes},
   journal={Tunis. J. Math.},
   volume={7},
   date={2025},
   number={1},
   pages={53--89},
   issn={2576-7658},
   doi={10.2140/tunis.2025.7.53},
}

\bib{benoist-ottem-2021}{article}{
   author={Benoist, Olivier},
   author={Ottem, John Christian},
   title={Two coniveau filtrations},
   journal={Duke Math. J.},
   volume={170},
   date={2021},
   number={12},
   pages={2719--2753},
   issn={0012-7094},
   doi={10.1215/00127094-2021-0055},
}

\bib{ekedahl-2009}{arXiv}{
author={Ekedahl, Torsten}, 
title={Approximating classifying spaces by smooth projective varieties}, 
eprint={0905.1538},
}

\bib{kono-mimura-1974}{article}{
   author={Kono, Akira},
   author={Mimura, Mamoru},
   title={On the cohomology of the classifying spaces of ${\rm PSU}(4n+2)$
   and ${\rm PO}(4n+2)$},
   journal={Publ. Res. Inst. Math. Sci.},
   volume={10},
   date={1974/75},
   number={3},
   pages={691--720},
   issn={0034-5318},
   doi={10.2977/prims/1195191887},
}

\bib{lee-2013}{book}{
   author={Lee, John M.},
   title={Introduction to smooth manifolds},
   series={Graduate Texts in Mathematics},
   volume={218},
   edition={2},
   publisher={Springer, New York},
   date={2013},
   pages={xvi+708},
   isbn={978-1-4419-9981-8},
}

\bib{milnor-1958}{article}{
   author={Milnor, John},
   title={The Steenrod algebra and its dual},
   journal={Ann. of Math. (2)},
   volume={67},
   date={1958},
   pages={150--171},
   issn={0003-486X},
   doi={10.2307/1969932},
}


\bib{pirutka-yagita-2015}{article}{
   author={Pirutka, Alena},
   author={Yagita, Nobuaki},
   title={Note on the counterexamples for the integral Tate conjecture over
   finite fields},
   journal={Doc. Math.},
   date={2015},
   pages={501--511},
   issn={1431-0635},
}

\bib{toda-1984}{article}{
   author={Toda, Hiroshi},
   title={Cohomology of classifying spaces},
   conference={
      title={Homotopy theory and related topics},
      address={Kyoto},
      date={1984},
   },
   book={
      series={Adv. Stud. Pure Math.},
      volume={9},
      publisher={North-Holland, Amsterdam},
   },
   isbn={0-444-70201-6},
   date={1987},
   pages={75--108},
   doi={10.2969/aspm/00910075},
}
\bib{yagita-2023}{article}{
   author={Yagita, Nobuaki},
   title={Coniveau filtrations and Milnor operation $Q_n$},
   journal={Math. Proc. Cambridge Philos. Soc.},
   volume={175},
   date={2023},
   number={3},
   pages={521--538},
   issn={0305-0041},
   doi={10.1017/s0305004123000282},
}
\end{biblist}
\end{bibdiv}


\end{document}